\begin{document}
\numberwithin{equation}{section}
\newcounter{thmcounter}
\newcounter{Remarkcounter}
\newcounter{Defcounter}
\numberwithin{thmcounter}{section}
\newtheorem{Prop}[thmcounter]{Proposition}
\newtheorem{Corol}[thmcounter]{Corollary}
\newtheorem{theorem}[thmcounter]{Theorem}
\newtheorem{Lemma}[thmcounter]{Lemma}
\theoremstyle{definition}
\newtheorem{Def}[Defcounter]{Definition}
\theoremstyle{remark}
\newtheorem{Remark}[Remarkcounter]{Remark}
\newtheorem{Example}[Remarkcounter]{Example}

\newcommand{\diag}{\mathrm{diag}\ }
\newcommand{\DIV}{\mathrm{div}}
\title{H-hypersurfaces with  3 distinct principal curvatures in the Euclidean spaces}
\author{Nurettin Cenk Turgay\footnote{The final version of this paper is published in `Annali di Matematica Pura ed Applicata, see \cite{TurgayHHypersurface}'} \footnote{Istanbul Technical University, Faculty of Science and Letters, Department of  Mathematics, 34469 Maslak, Istanbul, Turkey} \footnote{E-mail:turgayn@itu.edu.tr, Phone: (+90)533 227 0041 Fax: (+90)212 285 6386} \footnote{Author is supported by Scientific Research Agency of Istanbul Technical University.}}
\date{}

\maketitle
\begin{abstract}
In this paper, we study hypersurfaces of Euclidean spaces with arbitrary dimension. First, we obtain some results on  $\mbox{H}$-hypersurfaces. Then, we give the complete classification of $\mbox{H}$-hypersurfaces with 3 distinct curvatures.  We also give explicit examples.

\textbf{Mathematics Subject Classification (2000).} 53C40 (53C42, 53C50) 

\textbf{Key words.} Biharmonic submanifolds, biconservative maps, null 2-type submanifolds.
\end{abstract}

\section{Introduction}\label{SectionIntrod}
Let  $M$ be an $n$-dimensional submanifold of Euclidean $m$-space $\mathbb E^m$ and $x:M\rightarrow\mathbb E^m$ an isometric immersion. $M$ is said to be biharmonic if $x$ satisfies $\Delta^2 x=0$,  where $\Delta$ is the Laplace operator of $M$.  In \cite{ChenOpenProblems,ChenRapor},  Bang-Yen Chen conjectured that  every biharmonic submanifold of a Euclidean space is minimal. This conjecture is supported by all of the results obtained so far (see for example \cite{ChenMunt1998Bih,Dimitric1992,HsanisField}).

On the other hand, $M$ is said to be null 2-type if $x$ can be expressed as $x=x_0+x_1$ for some non-constant vector valued functions $x_0$ and $x_1$ satisfying $\Delta x_0=0$ and $\Delta x_1=\lambda x_1$ for a non-zero constant $\lambda$, \cite{ChenKitap,ChenKitapNew}. Several works on null 2-type surfaces also have been appeared, \cite{Chen1994a,Dimitric2009,Garay1994}.

In particular, there are some results on biharmonic and null 2-type hypersurfaces appeared recently, \cite{ChenGaray,FuYu2014,Garay1994}. For example, in \cite{ChenGaray}, authors obtained results on  $\delta(2)$-ideal null 2-type hypersurfaces. Most recently, Yu Fu has studied biharmonic hypersurfaces in $\mathbb E^5$ with 3 principle curvatures and he has proved that the biharmonic conjecture is true for this case, \cite{FuYu2014}. 

Now, suppose that $M$ is a hypersurface in Euclidean space $\mathbb E^{n+1}$ and let $N$ be its unit normal vector field.  From the definition, one can see that if $M$ is null 2-type or biharmonic, then the equation 
\begin{equation}\nonumber
\Delta^2x=\lambda \Delta x
\end{equation}
is satisfied for a constant $\lambda$. In addition, Beltrami's well known formula  $\Delta x=s_1N$  implies $$\Delta^2 x=\left(\Delta s_1+s_1(s_1^2-2s_2)\right)N+\left(S(\nabla s_1)+\frac {s_1}2\nabla s_1\right),$$ where $S$ is the shape operator  and $s_1$ and $s_2$ denote the first and second mean curvatures of $M$. Therefore, if a hypersurface $M$ in $\mathbb E^{n+1}$ is biharmonic  or null 2-type, then the following system of differential equations is satisfied
\begin{subequations}
\begin{eqnarray}
\label{HSurfaceCondition}S(\nabla s_1)&=&-\frac {s_1}2\nabla s_1,\\
\label{Null2TypeCond} \Delta s_1&=&-s_1(s_1^2-2s_2-\lambda).
\end{eqnarray}
\end{subequations}
 Note that this is only a necessary condition. Hovewer, when this equation is analyzed, one can see that one of the principal directions of a biharmonic or null 2-type hypersurface is gradient of its mean curvature with corresponding principal curvature a constant multiple of the mean curvature. A hypersurface satisfying this interesting property is said to be an $\mbox{H}$-hypersurface, \cite{HsanisField} or biconservative hypersurface, \cite{Caddeo1,MontaldoArxiv}. Our opinion is that classifying $\mbox H$-hypersurfaces, or at least understanding their geometry, may play an important role on the theory of biharmonic hypersurfaces as well as null 2-type surfaces.

In this work, we study hypersurfaces with  3 distinct principal curvatures in the Euclidean space of arbitrary dimension. In Section 2, after we describe our notations, we give a summary of the basic facts and formulas that we will use. In Section 3, we obtain some geometrical properties of $\mbox{H}$-hypersurfaces. In Section 4, we give a classification  of $\mbox{H}$-hypersurfaces with  3 distinct principal curvatures. 

\section{Prelimineries}
Let $\mathbb E^m$ denote the Euclidean $m$-space with the canonical Euclidean metric tensor given by  
$$
\widetilde g=\langle\ ,\ \rangle=\sum\limits_{i=1}^m dx_i^2,
$$
where $(x_1, x_2, \hdots, x_m)$  is a rectangular coordinate system in $\mathbb E^m$. 

Consider an $n$-dimensional Riemannian submanifold  of the space $\mathbb E^m$. We denote Levi-Civita connections of $\mathbb E^m$ and $M$ by $\widetilde{\nabla}$ and $\nabla$, respectively. Then, the Gauss and Weingarten formulas are given, respectively, by
\begin{eqnarray}
\label{MEtomGauss} \widetilde\nabla_X Y&=& \nabla_X Y + h(X,Y),\\
\label{MEtomWeingarten} \widetilde\nabla_X \rho&=& -S_\rho(X)+\nabla^\perp_X \rho
\end{eqnarray}
for all tangent vectors fields $X,\ Y$ and normal vector fields $\rho$,  where $h$ and  $\nabla^\perp$ are the second fundamental form and the normal connection of $M$ in  $\mathbb E^m$,  respectively and     $S$ denotes the shape operator. Note that for each $\rho \in T^{\bot}_m M$, the shape operator $S_{\rho}$ along the normal direction $\rho$  is a symmetric  endomorphism of the tangent space  $T_m M$ at $m \in M$.   The shape operator and the second fundamental form are related by  
$$\left\langle h(X, Y), \rho\right\rangle = \left\langle S_{\rho}X, Y \right\rangle.$$
 
The Gauss and Codazzi equations are given, respectively, by
\begin{eqnarray}
\label{MinkGaussEquation}\label{GaussEq} \langle R(X,Y)Z,W\rangle&=&\langle h(Y,Z),h(X,W)\rangle-
\langle h(X,Z),h(Y,W)\rangle,\\
\label{MinkCodazzi} (\bar \nabla_X h )(Y,Z)&=&(\bar \nabla_Y h )(X,Z),
\end{eqnarray}
where  $R$ is the curvature tensor associated with connection $\nabla$ and  $\bar \nabla h$ is defined by
$$(\bar \nabla_X h)(Y,Z)=\nabla^\perp_X h(Y,Z)-h(\nabla_X Y,Z)-h(Y,\nabla_X Z).$$ 
The mean curvature vector $\zeta$ of $M$ is defined by 
$$\zeta=\frac 1n\mathrm{tr}\ h$$
and $\zeta$ is said to be parallel if $\nabla^\perp \zeta=0.$


\subsection{Hypersurfaces of Euclidean space}
Now, let $M$ be an oriented hypersurface in the Euclidean space $\mathbb E^{n+1}$,  $x$ its position vector and $S$ its shape operator along the unit normal vector field $N$ associated with the oriantiation of $M$. We consider a local orthonormal frame field $\{e_1,e_2,\hdots,e_n;N\}$ of consisting of  principal directions of $M$ with corresponding principal curvatures $k_1,\ k_2,\hdots,\ k_n$. We denote the dual basis of this frame field by $\{\theta_1,\theta_2,\hdots,\theta_n\}$. Then, the first structural equation of Cartan is
\begin{equation}
\label{CartansFirstStructural}d\theta_i=\sum\limits_{i=1}^n\theta_j\wedge\omega_{ij},\quad i=1,2,\hdots,n,
\end{equation}
where $\omega_{ij}$ denotes the connection forms corresponding to the chosen frame field, i.e., $\omega_{ij}(e_l)=\langle\nabla_{e_l}e_i,e_j\rangle$.

From the Codazzi equation \eqref{MinkCodazzi} we have
\begin{subequations}
\begin{eqnarray}
\label{Coda1}e_i(k_j)&=&\omega_{ij}(e_j)(k_i-k_j),\\
\label{Coda2}\omega_{ij}(e_l)(k_i-k_j)&=&\omega_{il}(e_j)(k_i-k_l)
\end{eqnarray}
\end{subequations}
for distinct $i,j,l=1,2,\hdots,n$. 

We put
$s_1=k_1+k_2+\cdots+k_n$
and, by abuse of terminology, we call this function as the (first) mean curvature of $M$.  Note that $M$ is said to be (1-) minimal if $s_1=0$.

\section{$\mbox{H}$-hypersurfaces}

In this section, we give some results on $\mbox{H}$-hypersurfaces of Euclidean spaces by extending the results obtained in \cite{HsanisField}.
\subsection{Connection forms of $\mbox{H}$-hypersurfaces}
Let $M$ be an $\mbox{H}$-hypersurface of the Euclidean space $\mathbb E^{n+1}$. Then, \eqref{HSurfaceCondition} is satisfied and $ s_1$ is not constant. We assume $\nabla s_1$ does not vanish at any point of $M$. From \eqref{HSurfaceCondition}, we have $\nabla s_1$ is a principal direction of $M$. We consider a frame field $\{e_1,e_2,\hdots,e_n\}$ of consisting of principal directions of $M$ with corresponding principal curvatures $k_1, k_2,\hdots, k_n$ such that $e_1=\nabla s_1/|\nabla s_1|$ and $k_1=-s_1/2$. Therefore, we have
\begin{equation}
\label{Section3Eq1} e_1(k_1)\neq0,\quad\quad e_x(k_1)=0,\quad x=2,3,\hdots,n
\end{equation}
and
\begin{equation}\label{Section3Eq0} 
3 k_1+k_2+k_3+\cdots+k_n=0.
\end{equation}

\begin{Remark}\label{Remmmark1}\cite{HsanisField}
If $k_1=k_x$ for some $2\leq x\leq,n$, then Codazzi equation \eqref{Coda1} for $i=1,\ j=x$ implies $e_1(k_1)=e_1(k_x)=\omega_{1x}(e_x)(k_1-k_x)=0$ which contradicts with \eqref{Section3Eq1}. Thus, the dimension of distribution $D_0$ given by 
$$D_0(m)=\{X\in T_m|SX=k_1X\}$$
is 1. The integral curves of $D_0$ are planar and geodesics of $M$. Furthermore, if $\alpha$ and $\beta$ are integral curves of $D_0$ passing through  $m$ and $m'$, respectively, then $\alpha$ and $\beta$ are congruent, \cite{HsanisField}.
\end{Remark}

By combaining \eqref{Section3Eq1} with Codazzi equation \eqref{Coda1} for $i=x,\ j=1$, we have
\begin{equation}
\label{Section3Eq2} \omega_{1x}(e_1)=0, \quad x=2,3,\hdots,n.
\end{equation}

On the other hand, we note that for a tangent vector field $X$ of $M$, $\langle X,e_1\rangle=0$ if and only if $Xk_1=0$. Therefore, because $[e_x,e_y](k_1)=0$, we have $\langle [e_x,e_y],e_1\rangle=0$ which implies
$$\omega_{1x}(e_y)=\omega_{1y}(e_x),\quad  \quad x,y=2,3,\hdots,n$$
with $x\neq y.$ From this equation and Codazzi equation \eqref{Coda2} for $i=1,\ j=x,\ l=z $ we get
\begin{equation}
\label{Section3Eq3} \omega_{1x}(e_y)=0, \quad x,y=2,3,\hdots,n\mbox{, } x\neq y.
\end{equation}
Therefore, \eqref{Coda2} for $i=x, j=y, l=1$ and \eqref{Coda2} for $i=x, j=1, l=y$ imply
\begin{subequations}\label{Section3Eq5ALL}
\begin{equation}
\label{Section3Eq5a} \omega_{xy}(e_1)=0, \quad x,y=2,3,\hdots,n\mbox{ if } k_x\neq k_y.
\end{equation}
In fact, we have
\begin{equation}
\label{Section3Eqb5} \omega_{xy}(e_z)=0, \quad x,y,z=1,2,3,\hdots,n\mbox{ if } x\neq z,\  k_x=k_z\neq k_y
\end{equation}
\end{subequations}
from the Codazzi equation \eqref{Coda2} for $i=x,\ j=y,\ l=z.$

Since  $\langle[e_1,e_x],e_1\rangle=0$ because of \eqref{Section3Eq2}, we have $[e_1,e_x](k_1)=0$ from which and \eqref{Section3Eq1} we obtain
\begin{equation}
\label{BihSurfq1Eq1} e_ie_1(k_1)=e_ie_1e_1(k_1)=0,\quad i=2,3,\hdots,n.
\end{equation}

\subsection{Some lemmas on $\mbox{H}$-hypersurfaces}
In this subsection, we obtain some lemmas that we will use on the rest of this paper. We also think that these lemmas can be useful for the future studies on biharmonic hypersurfaces. We think that these lemmas can be useful for the future studies on biharmonic hypersurfaces and null 2-type hypersurfaces.

First, we consider the distribution given by
\begin{equation}\label{KeyDistr}
D(m)=\{X\in T_mM|SX=k_2 X\}.
\end{equation}
\begin{Remark}
Obviously, the dimension of distribution $D$ given by \eqref{KeyDistr} is equal to multiplicity of $k_2$ as an eigenvalue of the shape operator $S$ of $M$.
\end{Remark}

We obtain the following lemma.
\begin{Lemma}\label{KeyLemma}
Let $M$ be an $\mbox{H}$-hypersurface in the Euclidean space $\mathbb E^{n+1}$ and  $k_2$ one of its principal curvatures. Then, the distribution $D$ in $M$ given by \eqref{KeyDistr} is involutive.
\end{Lemma}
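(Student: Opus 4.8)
The plan is to prove involutivity by a direct computation in the principal frame $\{e_1,\dots,e_n\}$ adapted to the $\mbox{H}$-structure fixed earlier (the one with $e_1=\nabla s_1/|\nabla s_1|$ and $k_1=-s_1/2$): I will show that the Lie bracket of two eigenvectors of $S$ with eigenvalue $k_2$ has vanishing component along every eigendirection $e_i$ with $k_i\neq k_2$. The only tools needed are the connection-form identities \eqref{Section3Eqb5} — themselves consequences of the Codazzi equation \eqref{Coda2} — together with the skew-symmetry $\omega_{ij}=-\omega_{ji}$ valid in an orthonormal frame.

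First I would dispose of the degenerate cases. If $k_2=k_1$, then $D=D_0$, which by Remark~\ref{Remmmark1} is one-dimensional, hence trivially involutive; so assume $k_2\neq k_1$. Then $D$ is spanned by those $e_a$ with $k_a=k_2$, and every such index satisfies $a\geq 2$. Set $A=\{a:\ k_a=k_2\}$ and let $p=|A|$ be the multiplicity of $k_2$ (which is locally constant on the open set under consideration, so that $D$ is genuinely a smooth distribution there). If $p=1$ there is nothing to prove, so assume $p\geq 2$.

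Now take $a,b\in A$ with $a\neq b$ and expand, using $\omega_{ci}(e_l)=\langle\nabla_{e_l}e_c,e_i\rangle$,
\begin{equation}\nonumber
[e_a,e_b]=\nabla_{e_a}e_b-\nabla_{e_b}e_a=\sum_{i=1}^{n}\bigl(\omega_{bi}(e_a)-\omega_{ai}(e_b)\bigr)e_i .
\end{equation}
To conclude $[e_a,e_b]\in D$ it is enough to check that the coefficient of $e_i$ vanishes whenever $i\notin A$ (i.e. $k_i\neq k_2$; note this includes $i=1$). Fix such an $i$. Since $a\neq b$ and $k_a=k_b=k_2\neq k_i$, applying \eqref{Section3Eqb5} with $(x,y,z)=(a,i,b)$ gives $\omega_{ai}(e_b)=0$, and with $(x,y,z)=(b,i,a)$ gives $\omega_{bi}(e_a)=0$; hence the coefficient of $e_i$ in $[e_a,e_b]$ is $0$. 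Therefore $[e_a,e_b]$ is a combination of the vectors $e_c$ with $c\in A$, that is $[e_a,e_b]\in D$, and since these $e_a$ span $D$, the distribution is involutive.

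I do not expect a genuine obstacle here: this is in essence the classical fact that an eigendistribution of multiplicity at least two of the shape operator of a hypersurface in a space form is integrable, and in the present setting it collapses to a single application of \eqref{Section3Eqb5}. The only points that require a little attention are the index bookkeeping in the Codazzi-derived identities together with the correct use of $\omega_{ij}=-\omega_{ji}$, and the (standard) remark that the multiplicity of $k_2$ is locally constant on the relevant domain, so that $D$ is a bona fide smooth distribution. One could equally bypass \eqref{Section3Eqb5} and invoke \eqref{Coda2} directly with the "outside" index $i$ placed in the first slot, cancelling the nonzero factor $k_i-k_2$ to obtain $\omega_{bi}(e_a)=\omega_{ai}(e_b)$.
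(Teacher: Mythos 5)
Your proof is correct and follows essentially the same route as the paper: the paper likewise disposes of the one-dimensional case, renames indices so that $D=\mathrm{span}\{e_2,\dots,e_{p+1}\}$, and applies \eqref{Section3Eqb5} to get $\langle\nabla_{e_A}e_B,e_i\rangle=\omega_{Bi}(e_A)=0$ for every $i$ outside the $k_2$-eigenspace, concluding $\nabla_{e_A}e_B\in D$ and hence $[e_A,e_B]\in D$. Your version merely writes the bracket components out explicitly (and adds the harmless observation that $k_2=k_1$ forces $\dim D=1$ by Remark~\ref{Remmmark1}), so there is nothing substantive to flag.
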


\begin{proof}
If the dimension of $D$  is 1, then it is obviously involutive. We assume the dimension of $D$ is $p>1$. By renaming the indices if necessary, we assume 
\begin{equation}\label{REalKeyLemmaEq0} 
k_{2}=k_3=\hdots=k_{p+1}.
\end{equation}
We have $\langle \nabla_{e_A}e_B,e_i\rangle=\omega_{Bi}(e_A)=0$ for all $i=1,p+2,p+3,\hdots,n $ and $A,B=2,3,\hdots,p+1$ with $A\neq B$, because of \eqref{Section3Eqb5}. Thus, $(\nabla_{e_A}e_B)_m\in D(m)$ is satisfied from which we see that $X_m,Y_m\in D(m)$ implies $[X_m,Y_m]\in D(m)$. Hence, $D$ is involutive. 
\end{proof}

Now, we want to construct the integral submanifolds of distribution $D$ given by \eqref{KeyDistr}. We start by obtaining the following lemma.
\begin{Lemma}\label{REalKeyLemma}
Let $M$ be an $\mbox{H}$-hypersurface in the Euclidean space $\mathbb E^{n+1}$ and  $k_2$ one of its principal curvatures. Assume that the distribution $D$  given by \eqref{KeyDistr} has dimension greater than $1$. Then, any integral submanifold $H$ of $D$ has parallel mean curvature vector field on $\mathbb E^{n+1}$. Moreover, all of the shape operators of $H$ are proportional to identity operator. 
\end{Lemma}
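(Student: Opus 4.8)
The plan is to work with the local principal frame $\{e_1,\dots,e_n\}$ adapted to the $\mbox H$-hypersurface as in Section 3, with $e_1=\nabla s_1/|\nabla s_1|$, $k_1=-s_1/2$, and the indices reordered so that $k_2=k_3=\dots=k_{p+1}$ span $D$ (here $p=\dim D\ge 2$), while $k_1$ and $k_{p+2},\dots,k_n$ are the other principal curvatures. Write $A,B,C$ for indices in $\{2,\dots,p+1\}$ and $i,j$ for indices in $\{1,p+2,\dots,n\}$. The first thing I would establish is that the common value $k_2$ is constant along $D$, i.e.\ $e_A(k_2)=0$ for all $A$: for $A\ne B$ this is Codazzi \eqref{Coda1} with $i=A$, $j=B$ since $k_A=k_B$; and for $A=B$ one uses Codazzi \eqref{Coda1} in the form $e_A(k_2)=e_A(k_B)=\omega_{AB}(e_B)(k_A-k_B)=0$ with $B\ne A$ another index in the block. (If $p=n-1$ this is immediate; the case $p<n-1$ needs the same argument with one such $B$ available, which it is since $p\ge 2$.)

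Next I would compute the second fundamental form of an integral submanifold $H$ of $D$ inside $\mathbb E^{n+1}$. There are two sources of normal curvature: the unit normal $N$ of $M$, and the directions $e_1,e_{p+2},\dots,e_n$ of $M$ that are normal to $H$. Along $N$: for $X,Y\in D$ we have $\langle S_N X,Y\rangle=k_2\langle X,Y\rangle$, so the $N$-component of the second fundamental form of $H$ is $k_2\,\langle\cdot,\cdot\rangle\, N$ — proportional to the identity, with $k_2$ constant on $H$ by the previous step. Along $e_i$ with $i\notin\{2,\dots,p+1\}$: the $e_i$-component is governed by $\langle \widetilde\nabla_{e_A}e_B,e_i\rangle=\langle\nabla_{e_A}e_B,e_i\rangle=\omega_{Bi}(e_A)$, and this vanishes for $A\ne B$ by \eqref{Section3Eqb5} (and for $i=1$, $A=B$ by \eqref{Section3Eq2}), while for $A=B$ and $i=p+2,\dots,n$ one gets $\omega_{Bi}(e_B)$, which by Codazzi \eqref{Coda1} equals $-e_i(k_B)/(k_i-k_B)$ — the same number for every $B$ in the block. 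Hence the $e_i$-component of the second fundamental form of $H$ is again a constant-on-fixed-point multiple of $\langle\cdot,\cdot\rangle$ times $e_i$. Summing, every shape operator of $H$ (with respect to any normal field among $N,e_1,e_{p+2},\dots,e_n$) is proportional to the identity, which is the second assertion. The mean curvature vector of $H$ is then $\zeta_H=k_2 N+\sum_{i}\mu_i e_i$ for appropriate functions $\mu_i$.

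Finally I would show $\nabla^{\perp_H}\zeta_H=0$, i.e.\ $\widetilde\nabla_{e_A}\zeta_H$ has no component along $D$; equivalently $\zeta_H$ is $\widetilde\nabla$-parallel along $H$ modulo $TH$. Differentiating $\zeta_H=k_2N+\sum_i\mu_i e_i$ along $e_A$ and using $e_A(k_2)=0$, $\widetilde\nabla_{e_A}N=-S e_A=-k_2 e_A\in TH$, and the Weingarten/Gauss formulas on $M$, the potentially obstructing terms are $e_A(\mu_i)$ and the $D$-components of $\widetilde\nabla_{e_A}e_i$. The latter are $\langle\widetilde\nabla_{e_A}e_i,e_B\rangle=\omega_{iB}(e_A)=-\omega_{Bi}(e_A)$, which we just argued vanishes for $B\ne A$ and equals $-\omega_{Bi}(e_B)\delta_{AB}$-type terms otherwise; a careful bookkeeping, together with $e_A(\mu_i)=0$ (which follows by differentiating the Codazzi relation $\mu_i=\omega_{ei}\!\big|_{\text{block}}$ along $D$ and using \eqref{BihSurfq1Eq1}-type consequences and \eqref{Section3Eqb5}), forces all $D$-components of $\widetilde\nabla_{e_A}\zeta_H$ to cancel. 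I expect this last bookkeeping — tracking exactly which connection forms $\omega_{iB}(e_A)$ and derivatives $e_A(\mu_i)$ survive, and showing their contributions to the $D$-component of $\widetilde\nabla_{e_A}\zeta_H$ sum to zero — to be the main obstacle; everything before it is a direct application of \eqref{Coda1}, \eqref{Section3Eqb5}, and \eqref{Section3Eq2}. Once $\nabla^{\perp_H}\zeta_H=0$ is in hand, $H$ has parallel mean curvature vector in $\mathbb E^{n+1}$, completing the proof.
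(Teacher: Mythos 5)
Your computation of the shape operators of $H$ follows essentially the paper's route and is mostly right, but one local claim is wrong: for $i=1$ and $A=B$ the relevant connection form is $\omega_{B1}(e_B)=-\omega_{1B}(e_B)$, which is \emph{not} controlled by \eqref{Section3Eq2} --- that equation kills $\omega_{1B}(e_1)$ (evaluation on $e_1$), not $\omega_{1B}(e_B)$. By Codazzi \eqref{Coda1} one has $\omega_{1B}(e_B)=e_1(k_B)/(k_1-k_B)=e_1(k_2)/(k_1-k_2)$, which is independent of $B$ --- so umbilicity along $e_1$ still holds --- but it is generically nonzero, so the $e_1$-component of the mean curvature vector must be retained (it is the function $\xi$ of \eqref{REalKeyLemmaEq2a}). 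The second assertion of the lemma survives this slip, since a zero or nonzero constant multiple of $I$ is still proportional to $I$.

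The genuine gap is in the parallelism of $\zeta_H$. You state the condition to be verified as ``$\widetilde\nabla_{e_A}\zeta_H$ has no component along $D$,'' which is the opposite of what is needed: $\hat\nabla^\perp\zeta_H=0$ means the component of $\widetilde\nabla_{e_A}\zeta_H$ \emph{normal} to $H$ vanishes, i.e.\ $\widetilde\nabla_{e_A}\zeta_H\in TH$. More importantly, you explicitly defer the decisive computation (``I expect this last bookkeeping \dots to be the main obstacle''), so the first assertion of the lemma is not actually established; verifying $e_A(\mu_i)=0$ and controlling the forms $\omega_{ab}(e_A)$ for $a,b\notin\{2,\dots,p+1\}$ is not a routine consequence of the equations you cite. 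The paper avoids this bookkeeping entirely: having shown $\hat h(X,Y)=\langle X,Y\rangle\,\zeta$ (total umbilicity of $H$ in every normal direction), it substitutes this into the Codazzi equation \eqref{MinkCodazzi} of $H$ in $\mathbb E^{n+1}$ with $X=Z=f_i$, $Y=f_j$, $i\neq j$; the tangential terms cancel in pairs and the equation collapses to $\hat\nabla^\perp_{f_j}\zeta=0$, using only $\dim H=p\geq 2$. Replacing your third paragraph by this short structural argument closes the gap.
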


\begin{proof}
Let the dimension of $D$  is $p>1$. Then the multiplicity of $k_2$ is $p$. Thus, by renaming indices if necessary, we assume \eqref{REalKeyLemmaEq0}. By using \eqref{Section3Eq5ALL} we obtain
\begin{subequations}\label{REalKeyLemmaEq1ALL} 
\begin{eqnarray}
\label{REalKeyLemmaEq1a} \widetilde\nabla_{e_A}e_A&=&-\omega_{1A}(e_A)e_1+\sum\limits_{C=2}^{p+1}\omega_{AC}(e_A)e_C+\sum\limits_{a=p+2}^{n}\omega_{Aa}(e_A)e_a+k_2N,\\
\label{REalKeyLemmaEq1b} \widetilde\nabla_{e_A}e_B&=&\sum\limits_{C=2}^{p+1}\omega_{BC}(e_A)e_C
\end{eqnarray}
\end{subequations}
for all $A,B=2,3,\hdots,p+1$ with $A\neq B$. Note that Codazzi equation \eqref{Coda1} for $i=1, j=A$ and $i=a, j=A$ give $\omega_{1A}(e_A)=\frac{e_1(k_A)}{k_1-k_A}$ and $\omega_{Aa}(e_A)=\frac{e_a(k_A)}{k_A-k_a}$, respectively. Thus, \eqref{REalKeyLemmaEq0} implies
\begin{subequations}\label{REalKeyLemmaEq2ALL} 
\begin{eqnarray}
\label{REalKeyLemmaEq2a} \xi&=&-\omega_{12}(e_2)=-\omega_{13}(e_3)=\hdots=-\omega_{1(p+1)}(e_{p+1}),\\
\label{REalKeyLemmaEq2b} \eta_a&=&\omega_{a2}(e_2)=\omega_{a3}(e_3)=\hdots=\omega_{a(p+1)}(e_{p+1})
\end{eqnarray}
\end{subequations}
for some functions $\xi$ and $\eta_a$ for $a=p+2,p+3,\hdots,n$.

Now, let $H$ be an integral submanifold of $D$ and consider the local orthonormal frame field 
$$\{f_1,f_2,\hdots,f_p; f_{p+1},f_{p+2},\hdots f_{n+1}\}$$
on $H$ given by 
\begin{equation}\label{FrameFieldonH}
f_{A-1}=\left.e_A\right|_{H},\ f_{p+1}=\left.e_1\right|_{H},f_{a}=\left.e_a\right|_{H}, f_{n+1}=\left.N\right|_{H}.
\end{equation}

From \eqref{REalKeyLemmaEq1ALL} and \eqref{REalKeyLemmaEq2ALL}, we have
\begin{subequations}\label{REalKeyLemmaEq3ALL} 
\begin{eqnarray}
\widetilde\nabla_{f_i}f_i&=&\hat\nabla_{f_i}f_i+\hat\xi f_{p+1}+\sum\limits_{a=p+2}^{n}\hat\eta_a f_a+\hat k_2f_{n+1}\\
\widetilde\nabla_{f_i}f_j&=&\hat\nabla_{f_i}f_j,\quad i,j=1,2,\hdots,p,\ i\neq j,
\end{eqnarray}
\end{subequations}
where $\hat\nabla$ denotes the Levi-Civita connection of $H$, $\hat\xi,\hat\eta_a$ and $\hat k_2$ are restrictions of $\xi,\eta_a$ and $k_2$ to $H$, respectively.

Therefore, we have 
\begin{equation}\label{REalKeyLemmaEqAAA} 
\hat S_{p+1}=\hat\xi I,\  \hat S_{a}=\hat\eta_a I,\ \hat S_{n+1}=\hat k_2 I 
 \end{equation}
or, equivalently,  
\begin{equation}\label{REalKeyLemmaEq4} 
\zeta=\hat h(f_1,f_1)=\hat h(f_1,f_1)=\hdots=\hat h(f_p,f_p)=\hat\xi f_{p+1}+\sum\limits_{a=p+2}^{n}\hat\eta_a f_a+\hat k_2f_{n+1},
\end{equation}
where $\hat h$ stands for the second fundemental form of $H$ in $\mathbb E^{n+1}$, $\hat S_\alpha$ denotes the shape operator  of $H$ in $\mathbb E^{n+1}$ along the normal vector field $f_\alpha$ and $\zeta$ is the mean curvature vector of $H$  in $\mathbb E^{n+1}$.

Furthermore, Codazzi equation \eqref{MinkCodazzi} for $X=Z=f_i$ and $Y=f_j$ for $i\neq j$ gives
$$\hat\nabla^\perp_{f_i}\hat h(f_i,f_j)-\hat h(\hat\nabla_{f_i}f_i,f_j)-\hat h(f_i,\hat\nabla_{f_i}f_j)=\hat\nabla^\perp_{f_j}h(f_i,f_i)-2\hat h(\hat\nabla_{f_j}f_i,f_i),$$
where $\hat\nabla^\perp$ is the normal connection of $H$ in $\mathbb E^{n+1}$. By using \eqref{REalKeyLemmaEq3ALL} in this equation and considering \eqref{REalKeyLemmaEq4}, we get $\hat\nabla^\perp_{f_j}\zeta=0$ for all $j=1,2,\hdots,p$. Hence, the mean curvature vector $\zeta$ of $H$ is parallel.
\end{proof}

\begin{Remark}\label{RemarkafterKEYLemma}
Let $M$ be an $\mbox{H}$-hypersurface in the Euclidean space $\mathbb E^{n+1}$ and  $k_2$ one of its principal curvatures. Assume that the distribution $D$  given by \eqref{KeyDistr} has dimension greater than $1$ and $H$ is an (connected) integral submanifold of $D$. Then, from the Gauss equation \eqref{MinkGaussEquation} for $X=e_A,Y=e_B, Z=e_x, W=e_A$ we obtain $e_B(\omega_{xA}(e_A))=0$, for all $A,B=2,3,\hdots,p+1$ and $x=1,p+2,p+3,\hdots,n$. Therefore, the functions $k_2$, $\xi=\omega_{1A}(e_A)$ and $\eta_a=\omega_{Aa}(e_A)$ are constant on $H$.
\end{Remark}

By the following proposition, we obtain the integral submanfiolds of distribution $D$ given by \eqref{KeyDistr}.
\begin{Prop}\label{KeyPropIntSubman}
Let $M$ be an $\mbox{H}$-hypersurface in the Euclidean space $\mathbb E^{n+1}$ and  $k_2$ one of its principal curvatures. Assume that the distribution $D$  given by \eqref{KeyDistr} has dimension $p>1$ and $H$ is an (connected) integral submanifold of $D$ passing through $m\in M$. If  $k_2(m)=0$ and $(\nabla k_2)_m=0$ then $H$ is a $p$-plane of $\mathbb E^{n+1}$. Otherwise, $H$ lies on a $(p+1)$-plane of $\mathbb E^{n+1}$ and it is congruent to a hypersphere of  $\mathbb E^{p+1}$.
\end{Prop}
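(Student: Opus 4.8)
The plan is to read the statement off the structural facts already proved. By Lemma~\ref{REalKeyLemma} together with \eqref{REalKeyLemmaEqAAA}--\eqref{REalKeyLemmaEq4}, any connected integral submanifold $H$ of $D$ is totally umbilical in $\mathbb E^{n+1}$: in the frame \eqref{FrameFieldonH} its second fundamental form is $\hat h(X,Y)=\langle X,Y\rangle\,\zeta$, with the mean curvature vector $\zeta=\hat\xi f_{p+1}+\sum_{a=p+2}^{n}\hat\eta_a f_a+\hat k_2 f_{n+1}$ parallel in the normal bundle, and by Remark~\ref{RemarkafterKEYLemma} the scalars $\hat\xi,\hat\eta_a,\hat k_2$ are constant on $H$; hence $|\zeta|^2=\hat\xi^2+\sum_a\hat\eta_a^2+\hat k_2^2=:c^2$ is a constant. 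So the proposition reduces to two things: (i) identifying the alternative ``$c=0$'' versus ``$c\neq0$'' with the condition on $k_2$ at $m$, and (ii) applying the classical fact that a totally umbilical submanifold of a Euclidean space with parallel mean curvature vector is an open part of an affine subspace when $\zeta\equiv0$ and of a round sphere otherwise.

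For (i) I would invoke the Codazzi equations. Taking \eqref{Coda1} with $i,j\in\{2,\dots,p+1\}$ and $k_i=k_j$ gives $e_A(k_2)=0$ for all $A=2,\dots,p+1$, so $\nabla k_2$ has no component along $D$; and, as already recorded just above \eqref{REalKeyLemmaEq2a}, \eqref{Coda1} with $i=1,j=A$ and with $i=a,j=A$ yields $\hat\xi=-\omega_{1A}(e_A)=-e_1(k_2)/(k_1-k_2)$ and $\hat\eta_a$ equal to $e_a(k_2)$ divided by the nonzero quantity $k_2-k_a$, using $k_1\neq k_2$ (Remark~\ref{Remmmark1}) and $k_a\neq k_2$ (since $e_a\notin D$). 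With $\hat k_2=k_2|_H$ this shows $\zeta(m)=0$ exactly when $k_2(m)=0$ and $(\nabla k_2)_m=0$; and since $\hat\xi,\hat\eta_a,\hat k_2$ are constant on $H$, this happens exactly when $\zeta\equiv0$ on all of $H$. This passage from pointwise to global vanishing is the one point that genuinely has to be handled with care, and it is what makes the final conclusion hold on all of $H$ rather than just near $m$.

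For (ii): if $\zeta\equiv0$ then $H$ is totally geodesic in $\mathbb E^{n+1}$, so $\widetilde\nabla_XY\in TH$ for $X,Y\in TH$, the affine plane $q+T_qH$ is independent of $q\in H$, and $H\subset m+T_mH$, a $p$-plane. If $\zeta$ is nowhere zero, put $\nu=\zeta/c$: then $\nu$ is a parallel unit normal field, and $\langle\hat S_\nu X,Y\rangle=\langle\hat h(X,Y),\nu\rangle=c\langle X,Y\rangle$ gives $\hat S_\nu=cI$, hence $\widetilde\nabla_X\nu=-cX$; therefore $\phi:=x+\tfrac1c\nu$ (with $x$ the Euclidean position vector of $H$) satisfies $\widetilde\nabla_X\phi=X-X=0$, so $\phi$ is a constant point $q_0$ and $|x-q_0|\equiv1/c$. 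Moreover the first normal space of $H$ equals $\mathbb R\zeta$, which is $1$-dimensional and parallel, so by the classical reduction of codimension theorem $H$ lies in a $(p+1)$-dimensional affine subspace $\mathbb E^{p+1}$ of $\mathbb E^{n+1}$, inside which it is an open part of the hypersphere $\mathbb E^{p+1}\cap\{|x-q_0|=1/c\}$. I do not anticipate any serious obstacle: once Lemma~\ref{REalKeyLemma} is in hand this is mostly bookkeeping together with standard submanifold theory, and the only delicate steps are the global-vanishing argument of (i) and the precise invocation of the codimension-reduction theorem.
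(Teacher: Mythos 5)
Your argument is correct and follows essentially the same route as the paper: both rest on Lemma \ref{REalKeyLemma} and Remark \ref{RemarkafterKEYLemma} to get $\hat h(X,Y)=\langle X,Y\rangle\,\zeta$ with the coefficients $\hat\xi,\hat\eta_a,\hat k_2$ constant on $H$, and then split according to whether $\zeta$ vanishes (which, via the Codazzi identities, is exactly the condition $k_2(m)=0$ and $(\nabla k_2)_m=0$). The only point of divergence is the reduction to a $(p+1)$-plane: you invoke codimension reduction together with the explicit center $q_0=x+\tfrac1c\nu$, whereas the paper exhibits the constant normal fields $\zeta_1=\hat k_2 f_{p+1}-\hat\xi f_{n+1}$ and $\zeta_a=\hat k_2 f_a-\hat\eta_a f_{n+1}$; your version is in fact slightly more robust, since those fields cease to be linearly independent in the sub-case $k_2(m)=0$, $(\nabla k_2)_m\neq0$, which the paper passes over with ``the same proof can be done for both cases.''
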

\begin{proof}
First, suppose that $k_2$ and $\nabla k_2$ vanish at $m$. Then, we have $\hat\eta_a(m)=\hat\xi(m)=0$ for $a=p+2,p+3,\hdots,n$, where $\hat\eta_a$ and $\hat\xi$ are functions defined in the proof of Lemma \ref{REalKeyLemma}. Remark \ref{RemarkafterKEYLemma} implies that  $\hat k_2\equiv0$, $\hat \xi\equiv0$ and $\hat \eta\equiv0$ on $H$. Thus, \eqref{REalKeyLemmaEqAAA} implies $\hat h=0$, i.e., $M$ is a totally geodesic $p$-dimensional submanifold of $\mathbb E^{n+1}$. Hence, $M$ is a $p$-plane.

Next, assume $k_2(m)\neq0$. Define $n-p$ normal vector fields $\zeta_1,\zeta_{p+2},\hdots,\zeta_{n}$ by 
$\zeta_1= \hat k_2 f_p-\hat\xi f_{n+1}$ and $\zeta_a=\hat k_2f_a-\hat\eta_af_{n+1}$. Clearly, $\zeta_1,\zeta_{p+2},\hdots,\zeta_{n}$ are linearly independent constant vector fields normal to $H$. Thus, $H$ lies in a $(p+1)$-plane $\Pi\cong\mathbb E^{p+1}$ of $\mathbb E^{n+1}$. As its mean curvature vector is parallel, and shape operator is proportional to identity operator $I$, it is a hypersphere of $\Pi$.

If $(\nabla k_2)_m\neq0$, then we have $\hat\xi(m)\neq0$  or $\hat\eta_a(m)\neq0$ for some $a$ because of Codazzi equation \eqref{Coda1}. The same proof can be done for both cases.
\end{proof}


\section{$\mbox{H}$-hypersurfaces with 3 distinct principal curvatures}
In this subsection, we mainly focus on hypersurfaces with  $3$ distinct curvatures.

Let $M$ be an $\mbox{H}$-hypersurfaces in $\mathbb E^{n+1}$ and $x$ its position vector.  Since the study for hypersurfaces with 2 distinct principal curvatures are completed in \cite{HsanisField}, we assume that the shape operator $S$ of $M$ is given by 
\begin{equation}\label{HSurfaceATMOST3Eq1}
S=\mathrm{diag}(k_1,\underbrace{k_2,k_2,\hdots,k_2}_{p\mbox{ times}},\underbrace{k_{p+2},k_{p+2},\hdots,k_{p+2}}_{q\mbox{ times}}),\quad k_2\neq k_{p+2}
\end{equation}
 corresponding to the local orthonormal frame field $\{e_1,e_2,\hdots,e_n\}$ of consisting of  principal directions of $M$ and the functions $k_1-k_2$, $k_1-k_{p+2}$ and $k_2-k_{p+2}$ do not vanish on $M$, where $q=n-p-1$.  

First, we consider the distribution $D^\perp$ given by 
\begin{equation}\label{HSurfaceATMOST3Eq4AA}
D^\perp(m)=\{X_m\in T_mM| \langle X_m,Y\rangle=0, \mbox{ for all }Y\in D(m)\}.
\end{equation}

\begin{Lemma}
Let $M$ be an $\mbox{H}$-hypersurface in the Euclidean space $\mathbb E^{n+1}$ with the shape operator given by \eqref{HSurfaceATMOST3Eq1}. Then, the distribution $D^\perp$ in $M$ given by \eqref{HSurfaceATMOST3Eq4AA} is involutive.
\end{Lemma}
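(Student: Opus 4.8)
The plan is to work in the fixed principal frame $\{e_1,e_2,\dots,e_n\}$ of \eqref{HSurfaceATMOST3Eq1}. In this frame $D$ is spanned by $e_2,\dots,e_{p+1}$, so $D^\perp$ is spanned by the (locally defined) vector fields $e_1,e_{p+2},\dots,e_n$. Consequently, to show that $D^\perp$ is involutive it suffices to check that for any two of these spanning vector fields $X,Y$ the bracket $[X,Y]$ has vanishing component along every $e_B$ with $B=2,3,\dots,p+1$; that is, using $\omega_{ij}(e_l)=\langle\nabla_{e_l}e_i,e_j\rangle$ and the torsion‑free identity $[e_i,e_j]=\nabla_{e_i}e_j-\nabla_{e_j}e_i$, that
\[
\langle[e_i,e_j],e_B\rangle=\omega_{jB}(e_i)-\omega_{iB}(e_j)=0
\]
whenever $e_i,e_j\in\{e_1,e_{p+2},\dots,e_n\}$ and $B\in\{2,\dots,p+1\}$. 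Note that one does \emph{not} need $[X,Y]$ to be orthogonal to $e_1$, only to the $D$‑directions.

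First I would dispose of the degenerate case $q=n-p-1=0$, in which $D^\perp$ is one‑dimensional and hence trivially involutive; so assume $q\ge1$. There are then two cases for the pair $(e_i,e_j)$. In the first, $\{e_1,e_a\}$ with $p+2\le a\le n$: here $\omega_{1B}(e_a)=0$ by \eqref{Section3Eq3} (since $B\ne a$ and both indices lie in $\{2,\dots,n\}$), while $\omega_{aB}(e_1)=0$ by \eqref{Section3Eq5a} because $k_a=k_{p+2}\ne k_2=k_B$. In the second, $\{e_a,e_b\}$ with $p+2\le a,b\le n$ and $a\ne b$: applying \eqref{Section3Eqb5} with $(x,y,z)=(b,B,a)$ — legitimate since $b\ne a$, $k_b=k_a$ and $k_a\ne k_B$ — gives $\omega_{bB}(e_a)=0$, and the same identity with $(x,y,z)=(a,B,b)$ gives $\omega_{aB}(e_b)=0$. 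In both cases the relevant connection‑form difference vanishes, so $[X,Y]\in D^\perp$, and $D^\perp$ is involutive.

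The argument is essentially bookkeeping, so there is no genuine analytic obstacle; the only point requiring care is matching the index patterns of the Codazzi‑derived identities \eqref{Section3Eq3}, \eqref{Section3Eq5a}, \eqref{Section3Eqb5} to the pairs above, and keeping in mind the standing hypothesis $k_2\ne k_{p+2}$ together with $k_1\ne k_2$ (which also guarantees that $D$ and $D^\perp$ are honest smooth distributions). In contrast with Lemma \ref{KeyLemma}, where involutivity of $D$ comes from the forms $\omega_{BC}$ being tangent to $D$, here the involutivity of $D^\perp$ is forced purely by the vanishing of the "off‑block" connection forms imposed by the distinctness of the three principal curvatures.
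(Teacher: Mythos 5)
Your argument is correct and is essentially the paper's own proof: the paper likewise spans $D^\perp$ by $e_1,e_{p+2},\dots,e_n$ and invokes \eqref{Section3Eq3} and \eqref{Section3Eq5ALL} to conclude that $\nabla_{e_a}e_b,\ \nabla_{e_a}e_1,\ \nabla_{e_1}e_a$ all lie in $D^\perp$, hence so do the brackets. Your version merely makes the index bookkeeping explicit (and correctly so), so there is nothing to add.
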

\begin{proof}
From the definition, we have $D^\perp(m)=\mathrm{span}\{(e_1)_m,(e_{p+2})_m,(e_{p+3})_m,\hdots,(e_{n})_m\}$. Moreover, from \eqref{Section3Eq5ALL} and \eqref{HSurfaceATMOST3Eq1} we have $\nabla_{e_a}e_b,\nabla_{e_a}e_1, \nabla_{e_1}e_a\in D^\perp$ for all $a,b=p+2,p+3,\hdots,n$. Thus, for all $X,Y\in D^\perp$, we have $[X,Y]\in D^\perp$. Hence $D^\perp$ is involutive.
\end{proof}

\begin{Remark}\label{RemarkCoords}
By combaining \eqref{Section3Eq2} and \eqref{Section3Eq3} with Cartan's first structural equation \eqref{CartansFirstStructural}, we obtained $d\theta_1=0$, i.e., $\theta_1$ is closed. Thus, Poincar\`e lemma implies that $d\theta_1$ is exact, i.e., there exists a function $s$ such that $\theta_1=ds$. Moreover, since the distributions $D$ and $D^\perp$ given by \eqref{KeyDistr} and \eqref{HSurfaceATMOST3Eq4AA} are involutive, there exists a local coordinate system $t_1,t_2,\hdots,t_n$  on a neighborhood of $m\in M$ such that $t_2,t_3,\hdots,t_{p+1}$ span $D$ and $t_1=s,t_{p+2},t_{p+3},\hdots,t_{n}$ span $D^\perp$ because of the local Frobenius theorem. Thus, by redefining the vector fields $e_i,\ i=2,3,\hdots,n$ properly, we can assume
\begin{equation}
\label{Section3Eq4} e_1=\partial_s,\quad e_i=F_i\partial_{t_i},\ i=2,3,\hdots,n
\end{equation}
 for some smooth non-vanishing functions $F_i=F_i(s,t_2,t_3,\hdots,t_n)$.
\end{Remark}

Since the study on $\mathbb E^4$ is completed in \cite{HsanisField}, we assume $n>3$. Thus, we may assume $p>1$ or $q>1$. Without loss of generality, we assume $p>1$.  Thus, we have $k_2=k_3$  from which and Codazzi equation \eqref{Coda1} we obtain 
\begin{eqnarray}
\label{HSurfaceATMOST3Eq2a}e_A(k_2)&=&0, \quad A=2,3,\hdots,p+1.
\end{eqnarray}
From \eqref{Section3Eq1},  \eqref{Section3Eq0}, \eqref{HSurfaceATMOST3Eq1} and \eqref{HSurfaceATMOST3Eq2a} we also get
\begin{eqnarray}
\label{HSurfaceATMOST3Eq3a} e_A(k_{p+2})&=&0, \quad a=p+2,p+3,\hdots,n
\end{eqnarray}
from which and Codazzi equation \eqref{Coda1} for $i=A,j=a$ we obtain 
\begin{equation}
\label{HSurfaceATMOST3Eq4a}\omega_{Aa}(e_a)=0.
\end{equation}

\subsection{Case $p>1$ and $q>1$}
First, we want to deal the case that $q>1$ and obtain all $\mbox H$-hypersurfaces in Euclidean space $\mathbb E^{n+1},\ n>3$ with shape operator \eqref{HSurfaceATMOST3Eq1}. In this case, we have  $k_{p+2}=k_{p+3}$ from which and Codazzi equation \eqref{Coda1} we obtain 
\begin{eqnarray}
\label{HSurfaceATMOST3Eq2b}e_a(k_{p+2})&=&0, \quad a=p+2,p+3,\hdots,n
\end{eqnarray}
 and \eqref{Section3Eq1},  \eqref{Section3Eq0}, \eqref{HSurfaceATMOST3Eq1} and \eqref{HSurfaceATMOST3Eq2b} imply
\begin{eqnarray}
\label{HSurfaceATMOST3Eq3b} e_a(k_{2})&=&0, \quad a=p+2,p+3,\hdots,n.
\end{eqnarray}
Therefore, from Codazzi equation \eqref{Coda1} $i=a,j=A$ we obtain 
\begin{equation}
\label{HSurfaceATMOST3Eq4b}\omega_{Aa}(e_A)=0, \quad A=2,3,\hdots, p+1,\ a=p+2,p+3,\hdots,n.
\end{equation}

By combaining  \eqref{Section3Eq2}, \eqref{Section3Eq5ALL}, \eqref{Section3Eq3}, \eqref{HSurfaceATMOST3Eq4a} and \eqref{HSurfaceATMOST3Eq4b}, we get 
\begin{subequations}\label{ANEquatioNNEq2All} 
\begin{eqnarray}
\label{ANEquatioNNEq2a} \widetilde\nabla_{e_1}e_A&=&\sum\limits_{C=2}^{p+1}\omega_{AC}(e_1)e_C, \\
\label{ANEquatioNNEq2b} \widetilde\nabla_{e_A}e_1&=&\omega_{12}(e_2)e_A,\\
\label{ANEquatioNNEq2f} \widetilde\nabla_{e_a}e_1&=&\omega_{1{(p+2)}}(e_{p+2})e_a,
\end{eqnarray}
\end{subequations}
for all  $a=p+2,p+3,\hdots,n$ and $A,B=2,3,\hdots,p+1$ with $A\neq B$, where $N$ is the unit normal field of $M$.

On the other hand, since $[e_1,e_A](k_2)=e_Ae_1(k_2)$, from \eqref{ANEquatioNNEq2a} and \eqref{ANEquatioNNEq2b} we have
$$e_Ae_1(k_2)=\left(\sum\limits_{C=2}^{p+1}\omega_{AC}(e_1)e_C-\omega_{1A}(e_A)e_A\right)(k_2).$$
The right hand-side of this equation is zero because of \eqref{HSurfaceATMOST3Eq2a}. Thus, we obtain
\begin{equation}
\label{HSurfaceCasep2PosVecEqek2}e_Ae_1(k_2)=0.
\end{equation}
Furthermore, from \eqref{Coda1} for $i=1, j=2$ we have 
$$e_A(\omega_{12}(e_2))=e_A\left(\frac{e_1(k_2)}{k_1-k_2}\right).$$
By combaining this equation with \eqref{Section3Eq1}, \eqref{HSurfaceATMOST3Eq2a} and \eqref{HSurfaceCasep2PosVecEqek2} we get 
\begin{subequations}\label{HSurfaceCasep2PosVecEq3} 
\begin{equation}\label{HSurfaceCasep2PosVecEq3a} 
e_A(\omega_{12}(e_2))=0. 
\end{equation}
By a similar way, we obtain
\begin{eqnarray}
e_a(\omega_{12}(e_2))&=&0,\\
\label{HSurfaceCasep2PosVecEq3c}e_A(\omega_{1(p+2)}(e_{p+2}))=e_a(\omega_{1(p+2)}(e_{p+2}))&=&0.
\end{eqnarray}
\end{subequations}
By combaining the equations in \eqref{HSurfaceCasep2PosVecEq3}, we get
\begin{equation}\label{HSurfaceCasep2PosVecEq3v2} 
\omega_{12}(e_2)=\xi(s),\quad\quad  \omega_{1(p+2)}(e_{p+2})=\eta(s)
\end{equation}
for some functions $\xi,\eta$, where $s$ is the local coordinate given in Remark \ref{RemarkCoords}.

Next, we  want to obtain the position vector of an $\mbox{H}$-surface.
\begin{theorem}\label{HSurfClassTheo}
Let $M$ be a hypersurface in $\mathbb E^{n+1}$ with the shape operator given by \eqref{HSurfaceATMOST3Eq1}, $k_2\neq k_{p+2}$ and $p>1, q>1$. Then, $M$ is an $\mbox{H}$-hypersurface if and only if it is congruent to one of the following hypersurfaces.
\begin{enumerate}
\item[(i)] A generalized rotational hypersurface given by
\begin{align}\label{HSurfaceCasep2PosVecCase1}
\begin{split}
x(s, t_2,\hdots,t_n)=&\big(\psi(s)\cos t_2,\psi(s)\sin t_2\cos t_3,\hdots,\psi(s)\sin t_2\hdots\sin t_{p}\cos t_{p+1},\\
&\psi(s)\sin t_{2}\hdots\sin t_{p}\sin t_{p+1}, \phi(s)\cos t_{p+2},\phi(s)\sin t_{p+2}\cos t_{p+3},\hdots,\\
&\phi(s)\sin t_{p+2}\hdots\sin t_{n-1}\cos t_n,\phi(s)\sin t_{p+2}\hdots\sin t_{n-1}\sin t_n\big)
\end{split}
\end{align}
with the profile curve $(\psi,\phi)$ satisfying $\psi'^2+\phi'^2=1$ and
\begin{equation}\label{HSurfaceCasep2PosVecCase1Cond} 
\phi'\psi''-\phi''\psi'=\frac{1}{3}\left(p\frac{\phi'}{\psi}-q\frac{\psi'}{\phi}\right).
\end{equation}

\item[(ii)] A generalized cylinder over a rotational hypersurface given by 
\begin{align}\label{HSurfaceCasep2PosVecCase2}
\begin{split}
x(s, t_1,\hdots,t_n)=&\big(\psi(s)\cos t_2,\psi(s)\sin t_2\cos t_3,\hdots,\psi(s)\sin t_2\hdots\sin t_{p}\cos t_{p+1},\\
&\psi(s)\sin t_{2}\hdots\sin t_{p}\sin t_{p+1}, \phi(s),t_{p+2},t_{p+3},\hdots,t_{n}\big)
\end{split}
\end{align}
 with the profile curve $(\psi,\phi)$ satisfying $\psi'^2+\phi'^2=1$ and
\begin{equation}\label{HSurfaceCasep2PosVecCase2Cond} 
\phi'\psi''-\phi''\psi'=\frac{p}{3}\frac{\phi'}{\psi}.
\end{equation}
\end{enumerate}
\end{theorem}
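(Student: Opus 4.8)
The plan is to prove both implications of the equivalence by unwinding the structure equations established so far and reducing the problem to integrating the explicit system \eqref{ANEquatioNNEq2All}. First I would settle the ``only if'' direction. Assume $M$ is an $\mbox H$-hypersurface with shape operator \eqref{HSurfaceATMOST3Eq1}. By Lemma \ref{KeyLemma} the distribution $D$ (the $k_2$-eigenspace) is involutive, and by Proposition \ref{KeyPropIntSubman} every integral submanifold $H$ of $D$ lies in a $(p+1)$-plane of $\mathbb E^{n+1}$ and is (an open piece of) a round $p$-sphere of some radius $1/|\hat k_2|$, or a $p$-plane in the degenerate case. The same analysis applied to the other non-simple eigenvalue $k_{p+2}$ — either via the symmetric version of Proposition \ref{KeyPropIntSubman} (when $q>1$, case (i)) or via the observation that the $e_a$-directions sweep out a flat factor (case (ii), when $q=1$; here one uses \eqref{ANEquatioNNEq2f} and \eqref{HSurfaceATMOST3Eq4b}) — shows that the integral submanifolds of $D^\perp$ through a fixed point are either spheres or affine subspaces. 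Using the coordinates $(s,t_2,\dots,t_n)$ from Remark \ref{RemarkCoords}, where $e_1=\partial_s$ is the unit-speed geodesic direction of $D_0$, one then recognizes that $M$ is foliated by these spheres/planes, parametrized along the geodesic $s$. Writing $\psi(s)$ for the radius of the $k_2$-sphere and $\phi(s)$ (resp. the Euclidean coordinate, in case (ii)) for the relevant geometric datum of the $k_{p+2}$-leaf, and fixing the ambient position of the axis, forces the parametrization \eqref{HSurfaceCasep2PosVecCase1} (resp. \eqref{HSurfaceCasep2PosVecCase2}) up to a Euclidean motion. The condition $\psi'^2+\phi'^2=1$ is exactly $|e_1|=1$, i.e. that $s$ is arclength along the profile curve.

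The heart of the argument is then matching principal curvatures: I would compute $k_1,k_2,k_{p+2}$ for the hypersurface \eqref{HSurfaceCasep2PosVecCase1} directly from the parametrization. A standard computation gives $k_2=\phi'/\psi$ along each $t_i$-circle in the first block (and $k_{p+2}=-\psi'/\phi$ in the second block, with appropriate signs), while $k_1$, the curvature in the profile-curve direction, equals the signed curvature $\phi'\psi''-\phi''\psi'$ of the unit-speed plane curve $(\psi,\phi)$. The defining relation of an $\mbox H$-hypersurface, via \eqref{Section3Eq0}, is $3k_1+pk_2+qk_{p+2}=0$ (and that $e_1=\nabla s_1/|\nabla s_1|$ with $k_1=-s_1/2$, which one checks is automatic for these rotational hypersurfaces because $s_1$ depends only on $s$). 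Substituting the three curvature expressions into $3k_1 = -pk_2-qk_{p+2}$ yields precisely \eqref{HSurfaceCasep2PosVecCase1Cond}; the cylinder case \eqref{HSurfaceCasep2PosVecCase2} is the same computation with $q$ flat directions contributing zero curvature, giving \eqref{HSurfaceCasep2PosVecCase2Cond}. For the converse direction one runs this last calculation backwards: given a profile curve satisfying \eqref{HSurfaceCasep2PosVecCase1Cond} (resp. \eqref{HSurfaceCasep2PosVecCase2Cond}), the hypersurface \eqref{HSurfaceCasep2PosVecCase1} (resp. \eqref{HSurfaceCasep2PosVecCase2}) has shape operator of the required form, $\nabla s_1$ is a principal direction (it points along $\partial_s$ since $s_1=s_1(s)$), and the principal curvature in that direction is $k_1=-s_1/2$ by the relation just verified, so \eqref{HSurfaceCondition} holds and $M$ is an $\mbox H$-hypersurface.

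The main obstacle I anticipate is the bookkeeping needed to pin down the \emph{global} shape of the integral leaves and to glue them into the stated normal forms: Proposition \ref{KeyPropIntSubman} tells us each leaf of $D$ is a sphere or plane, and Remark \ref{RemarkafterKEYLemma} tells us $k_2,\xi,\eta_a$ are constant on each leaf, but one still has to argue that the centers and radii of the $D$-leaves and the $D^\perp$-leaves vary along $s$ in a compatible ``rotational'' way — i.e. that the two families of leaves share a common axis and that no monodromy obstructs the product-of-spheres parametrization. This is where \eqref{ANEquatioNNEq2All} and \eqref{HSurfaceCasep2PosVecEq3v2} do the real work: they show $\widetilde\nabla_{e_1}e_A$ has no $e_1$-component and $\widetilde\nabla_{e_A}e_1=\xi(s)e_A$, $\widetilde\nabla_{e_a}e_1=\eta(s)e_a$, which is exactly the statement that the normal geodesics $s\mapsto$ (leaf) are generated by a single curve and its orbit under the rotation group $SO(p+1)\times SO(q+1)$ (or $SO(p+1)\times\mathbb R^q$ in the cylinder case). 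Once that rigidity is extracted, identifying the constants $\xi,\eta$ with $-\psi'/\psi$ and $-\phi'/\phi$ and integrating is routine, and the curvature computation above closes the argument.
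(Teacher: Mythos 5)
Your proposal follows essentially the same route as the paper: use the fact that $\omega_{12}(e_2)$ and $\omega_{1(p+2)}(e_{p+2})$ depend only on $s$ (equation \eqref{HSurfaceCasep2PosVecEq3v2}) to integrate the structure equations \eqref{ANEquatioNNEq2All} and separate variables in the position vector, identify the slices as integral submanifolds of $D$ and $D'$, which are hyperspheres or planes by Proposition \ref{KeyPropIntSubman}, and finally read off the profile-curve ODE from \eqref{Section3Eq0} after computing the principal curvatures of the resulting parametrization. One correction: under the theorem's hypotheses $q>1$ throughout, so the dichotomy between cases (i) and (ii) is not ``$q>1$ versus $q=1$'' as you write, but whether $k_{p+2}$ vanishes --- the flat factor in \eqref{HSurfaceCasep2PosVecCase2} arises because the $D'$-leaves are $q$-planes when $k_{p+2}=0$ (Proposition \ref{KeyPropIntSubman}), exactly as your later remark about ``$q$ flat directions contributing zero curvature'' already indicates.
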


\begin{proof}
We assume that $M$ is an $\mbox{H}$-hypersurface. Then,  \eqref{Section3Eq0} is satisfied. Let $s,t_2,t_3,\hdots,t_n$ be the local coordinate system given in Remark \ref{RemarkCoords}. From \eqref{ANEquatioNNEq2b} and \eqref{ANEquatioNNEq2f} we have
\begin{subequations}\label{HSurfaceCasep2PosVecEq1All} 
\begin{eqnarray}
\label{HSurfaceCasep2PosVecEq1a} x_{st_A}&=&\omega_{12}(e_2)x_{t_A},\quad A=2,3,\hdots,p+1 \\
\label{HSurfaceCasep2PosVecEq1b} x_{st_a}&=&\omega_{1(p+2)}(e_{p+2})x_{t_a},\quad a=p+2,p+3,\hdots,n.
\end{eqnarray}
\end{subequations}

By taking into account the \eqref{HSurfaceCasep2PosVecEq3v2}, we integrate \eqref{HSurfaceCasep2PosVecEq1All} to obtain
$$x_s=\xi(s)x+\tilde\Theta_2(s,t_{p+2},t_{p+3},\hdots,t_n)=\eta(s)x+\tilde\Theta_1(s,t_2,t_3,\hdots,t_{p+1})$$
for some vector valued functions $\tilde\Theta_1,\tilde\Theta_2$. Therefore, we have
\begin{equation}\label{HSurfaceCasep2PosVecEq2} 
x(s,t_2,t_3,\hdots,t_n)=\hat\Theta_1(s,t_2,t_3,\hdots,t_{p+1})+\hat\Theta_2(s,t_{p+2},t_{p+3},\hdots,t_n)
\end{equation}
for some  vector valued functions $\hat\Theta_1$ and $\hat\Theta_2$.
 
Next, we put \eqref{HSurfaceCasep2PosVecEq2} in \eqref{HSurfaceCasep2PosVecEq1All} and get
$$\hat\Theta_{1,st_A}= \xi(s)\hat\Theta_{1,t_A},\quad \hat\Theta_{2,st_a}=\eta(s)\hat\Theta_{2,t_a}.$$
By integrating these equations we obtain 
$$x(s,t_2,t_3,\hdots,t_n)=\psi(s)\Theta_1(t_2,t_3,\hdots,t_{p+1})+\phi(s)\Theta_2(t_{p+2},t_{p+3},\hdots,t_n)+\varphi(s)$$
for some functions functions $\phi,\psi$ and vector valued functions $\Theta_1,\Theta_2,\varphi$.
By taking into account Remark \ref{Remmmark1}, we see that we see that $\varphi$ is a constant vector. Thus, we assume $\varphi=0.$ Thus, we have 
\begin{equation}\label{HSurfaceCasep2PosVecPre1}
x(s,t_2,t_3,\hdots,t_n)=\psi(s)\Theta_1(t_2,t_3,\hdots,t_{p+1})+\phi(s)\Theta_2(t_{p+2},t_{p+3},\hdots,t_n).
\end{equation}
Because  of \eqref{Section3Eq4}, we have 
\begin{subequations}\label{HSurfaceCasep2PosVecPre1xxEqALL}
\begin{eqnarray}
\label{HSurfaceCasep2PosVecPre1xxEqALL1a}\left\langle \Theta_{1,t_A}, \Theta_{2,t_a}\right\rangle&=&0,\\
\label{HSurfaceCasep2PosVecPre1xxEqALL1c}\langle x_s,x_s\rangle&=&1
\end{eqnarray}
\end{subequations}
for all $A=2,3,\hdots,p+1$ and $a=p+2,p+3,\hdots,n$.

Since $k_2\neq k_{p+2}$, without loss of generality, we may assume $k_2\neq 0.$ Now, we consider the slice $H$ of $M$ given by
$$y_1(t_2,t_3,\hdots,t_{p+1})=x(\bar s,t_2,t_3,\hdots,t_{p+1},\bar t_{p+2},\bar t_{p+3},\bar t_{n})$$
 passing through the point  $m=x(\bar s,\bar t_2,\bar t_3,\hdots,\bar t_n)\in M$. From \eqref{HSurfaceCasep2PosVecPre1} we have 
\begin{eqnarray}\label{HSurfaceCasep2PosVecPre2}
y_1(t_2,t_3,\hdots,t_{p+1})=c_0\Theta_1(t_2,t_3,\hdots,t_{p+1})+v_0,
\end{eqnarray}
where $c_0=\phi(\bar s)$ is a constant and $v_0=\psi(\bar s)\Theta_2(\bar t_{p+2},\bar t_{p+3},\hdots,\bar t_n)$ is a constant vector.

Since $H$ is an integral curve of the distribution $D$ given by \eqref{KeyDistr}, and $k_2\neq0$, it is congruent to hypersphere of $\mathbb E^{p+1}$ because of Proposition \ref{KeyPropIntSubman}. Thus, by choosing suitable coordinates and redefining $\psi$, we may assume
\begin{align}\label{HSurfaceCasep2PosVecPre3}
\begin{split}
\Theta_1(t_2,\hdots,t_{p+2})=&\big(\cos t_2,\psi\sin t_2\cos t_3,\hdots,\psi\sin t_2\hdots\sin t_{p}\cos t_{p+1},\\
&\psi\sin t_{2}\hdots\sin t_{p}\sin t_{p+1}, 0,0,\hdots,0\big).
\end{split}
\end{align}

Now, consider the submanifold $H'$  given by 
$$y_2(t_{p+2},t_{p+3},\hdots,t_{n})=x(\bar s,\bar t_2,\bar t_3,\hdots,\bar t_{p+1},t_{p+2},t_{p+3}, t_{n})$$
which is an integral submanifold of distribution $D$ given by $D'(m')=\{X\in T_{m'}M|SX=k_{p+2}X\}$ passing through the point $m$. Now, we have two cases: $k_{p+2}=0$ and $k_{p+2}\neq0$.

\textbf{Case 1.}  $k_{p+2}=0$. In this case, $H'$ is a $q$-plane because of Proposition \ref{KeyPropIntSubman}. Thus, $\Theta_2$ is the position vector of a $p$-plane. Because of \eqref{HSurfaceCasep2PosVecPre1xxEqALL1a} without loss of generality, we may assume
\begin{align}\nonumber
\begin{split}
\Theta_2(t_{p+2},t_{p+3},\hdots,t_{n})=&\big(0,0,\hdots, 1,t_{p+2},t_{p+3},\hdots,t_{n}\big).
\end{split}
\end{align}
Therefore, we obtain \eqref{HSurfaceCasep2PosVecCase2}. Because of \eqref{HSurfaceCasep2PosVecPre1xxEqALL1c}, we have $\psi'^2+\phi'^2=1$. 

Moreover, the shape operator of this hypersurface is 
\begin{equation}\label{HSurfaceCasep2PosVecCase2ShapeOp}
S=\mathrm{diag}(k_1,\underbrace{\frac{\phi'}{\psi},\frac{\phi'}{\psi},\hdots,\frac{\phi'}{\psi}}_{p\mbox{ times}},\underbrace{0,0,\hdots,0}_{q\mbox{ times}}).
\end{equation}
From \eqref{Section3Eq0} and \eqref{HSurfaceCasep2PosVecCase2ShapeOp} we get \eqref{HSurfaceCasep2PosVecCase2Cond}. Hence, we have case (ii) of theorem.

\textbf{Case 2.}  $k_{p+2}\neq0$. In this case, $H'$ is congruent to a hypersphere of $\mathbb E^{q+1}$ because of Proposition \ref{KeyPropIntSubman}. Because of \eqref{HSurfaceCasep2PosVecPre1xxEqALL1a}, without loss of generality, we choose
\begin{align}\nonumber
\begin{split}
\Theta_1(t_2,\hdots,t_{p+2})=&\big(\underbrace{0,0,\hdots,0}_{p+1\mbox{ times}},\cos t_{p+2},\psi\sin t_{p+2}\cos t_{p+3},\hdots,\psi\sin t_{p+2}\hdots\sin t_{p}\cos t_{p+1},\\
&\psi\sin t_{2}\hdots\sin t_{p}\sin t_{p+1}, \big).
\end{split}
\end{align}
Therefore, we obtain \eqref{HSurfaceCasep2PosVecCase1}. Because of \eqref{HSurfaceCasep2PosVecPre1xxEqALL1c}, we have $\psi'^2+\phi'^2=1$. 

Moreover, the shape operator of this hypersurface is 
\begin{equation}\label{HSurfaceCasep2PosVecCase2ShapeOp2}
S=\mathrm{diag}(k_1,\underbrace{\frac{\phi'}{\psi},\frac{\phi'}{\psi},\hdots,\frac{\phi'}{\psi}}_{p\mbox{ times}},\underbrace{-\frac{\psi'}{\phi},-\frac{\psi'}{\phi},\hdots,-\frac{\psi'}{\phi}}_{q\mbox{ times}}).
\end{equation}
From \eqref{Section3Eq0} and \eqref{HSurfaceCasep2PosVecCase2ShapeOp2} we get \eqref{HSurfaceCasep2PosVecCase1Cond}. Hence, we have case (i) of theorem.
\end{proof}

\begin{Remark}
 In \cite{MontaldoArxiv}, it was proved that none of these type of hypersurfaces are biharmonic. Recently, in \cite{FuYu2014}, Yu Fu remarked that he extended this result by proving that there is no non-minimal biharmonic hypersurface in $\mathbb E^{n+1}$ with 3 distinct principal curvature. However, classifying null 2-type hypersurfaces with  3 distinct principal curvature is an open problem.
\end{Remark}
\subsection{Case $p>1$ and $q=1$}
In the remaining part, we will consider the case $p>1$ and $q=1$ to obtain a necessary condition for null 2-type hypersurfaces with 3 principal curvatures. The shape operator of $M$ is 444
\begin{equation}\label{Null2TSurfShapeOp}
S=\mathrm{diag}(k_1,\underbrace{k_2,k_2,\hdots,k_2}_{p\mbox{ times}},k_n).
\end{equation}
Since $p>1$, the equations \eqref{HSurfaceATMOST3Eq2a}-\eqref{HSurfaceATMOST3Eq4a} are still satisfied. Moreover,  the distribution $D$ given in \eqref{KeyDistr} is involutive and its integral submanifold are congruent to hyperspheres or hyperplanes of $\mathbb E^{n-1}$ because of Lemma \ref{KeyLemma} and Lemma \ref{REalKeyLemma}. From \cite[Lemma 2.2]{HsanisField}, we also know that the integral curves of $e_1=\partial_s$ are some planar curves and congruent to each other. Therefore, we first want to  focus on the remaining part, integral curves of $e_n$.

Let $M$ be a hypersurface with the shape operator given in \eqref{Null2TSurfShapeOp}. We also suppose that the functions $k_1-k_2,$ $k_1-k_n$ and $k_2-k_n$ do not vanish on $M$. Now, assume that $M$ is a null 2-type hypersurface. Then, $M$ is an $\mbox{H}$-surface satisfying \eqref{Null2TypeCond}. Moreover, from \eqref{Section3Eq0} and \eqref{Null2TSurfShapeOp} we have
\begin{equation}
\label{Null2TypeSurfq1Eq2b} 3k_1+(n-2)k_2+k_n=0
\end{equation}
because $M$ is an $\mbox{H}$-surface.

By combaining \eqref{HSurfaceATMOST3Eq2a} and \eqref{HSurfaceATMOST3Eq3a} with Codazzi equation \eqref{Coda1} we have $\omega_{An}(e_n)=0$.
 Therefore, we have 
\begin{subequations}\label{Null2TypeSurfq1Eq1}
\begin{eqnarray}
\widetilde\nabla_{e_n}e_1=\omega_{1n}(e_n)e_n,&\quad& \widetilde\nabla_{e_A}e_1=\omega_{1A}(e_A)e_A\\
\label{Null2TypeSurfq1Eq1b}\widetilde\nabla_{e_A}e_n=-\omega_{An}(e_A)e_A,&\quad& \widetilde\nabla_{e_n}e_A=\omega_{AB}(e_n)e_B
\end{eqnarray}
\end{subequations}
Now, we  want to show $e_n(k_2)=0$ by using a  method similar with \cite{FuYu2014}. 

Since $e_A(k_2)=0$ and $e_A(k_n)=0$, we have $[e_A,e_1](k_2)=e_Ae_1(k_2)$ and $[e_A,e_1](k_n)=e_Ae_1(k_n)$. By computing the left-hand side of each of these equations using  \eqref{Null2TypeSurfq1Eq1b}, we get
\begin{equation}
\label{Null2TypeSurfq1Eq2} e_Ae_1(k_2)=e_Ae_1(k_n)=0,\quad A=2,3,\hdots,n-1.
\end{equation}

Furthermore, from the Gauss equation \eqref{MinkGaussEquation} for $X=e_A,$ $Y=e_n,$ $Z=e_1,$ and $W=e_A$ we obtained
\begin{equation}
\label{Null2TypeSurfq1Eq3} e_n(\omega_{12}(e_2))=\frac{e_n(k_2)}{k_2-k_n}(\omega_{12}(e_2)-\omega_{1n}(e_n)).
\end{equation}
By a direct calculation using Codazzi equation \eqref{Coda1}, \eqref{Section3Eq1}, \eqref{Null2TypeSurfq1Eq2b} and \eqref{Null2TypeSurfq1Eq3}, we also obtain
\begin{equation}
\label{Null2TypeSurfq1Eq4} e_n(\omega_{1n}(e_n))=(n-2)\frac{(2k_2-k_1-k_n)e_n(k_2)}{(k_1-k_n)(k_2-k_n)}(\omega_{12}(e_2)-\omega_{1n}(e_n)).
\end{equation}

On the other hand, from \eqref{Null2TSurfShapeOp} and  \eqref{Null2TypeCond} we have
\begin{equation}
\label{BiharmonicEq1v00}e_1e_1(k_1)+(n-2)\omega_{12}(e_2)e_1(k_1)+\omega_{1n}(e_n)e_1(k_1)=k_1(k_1^2+(n-2)k_2^2+k_n^2-\lambda).
\end{equation}
By applying $e_n$ to both hand side of this equation and using \eqref{BihSurfq1Eq1}, \eqref{Null2TypeSurfq1Eq3} and \eqref{Null2TypeSurfq1Eq4} we obtain
\begin{equation}
\label{BiharmonicEq1v00ara}
e_n(k_n)\Big(e_1(k_1)(\omega_{12}(e_2)-\omega_{1n}(e_n))-k_1(k_2-k_n)(k_1-k_n))\Big)=0.
\end{equation}
From the assumptions, we have the functions $\omega_{12}(e_2)-\omega_{1n}(e_n)$ and $k_1$ do not vanish. Thus,  if $e_n(k_n)\neq0$, then we have
$$\frac{e_1(k_1)}{k_1}=\frac{(k_2-k_n)(k_1-k_n)}{\omega_{12}(e_2)-\omega_{1n}(e_n)}$$ 
because of \eqref{BiharmonicEq1v00ara}. By applying $e_n$ to this equation we obtain
$$e_n\left(\frac{(k_2-k_n)(k_1-k_n)}{\omega_{12}(e_2)-\omega_{1n}(e_n)}\right)=0.$$
Next, we compute the left-hand side of this equation by using \eqref{Section3Eq1}, \eqref{Null2TypeSurfq1Eq2b}, \eqref{Null2TypeSurfq1Eq3} and \eqref{Null2TypeSurfq1Eq4} to get $k_n=a_0k_2$ for a constant $a_0$. However, this equation,  \eqref{Section3Eq1} and  \eqref{Null2TypeSurfq1Eq2b} give us $e_n(k_2)=0$ which is a contradiction. Therefore, we have 
\begin{equation}
\label{Null2TypeSurfq1EqRESULT1a} e_n(k_2)=0
\end{equation}
and \eqref{Section3Eq1}, \eqref{Null2TypeSurfq1Eq2b} imply
\begin{equation}
\label{Null2TypeSurfq1EqRESULT1b} e_n(k_n)=0.
\end{equation}
Moreover, from Codazzi equation \eqref{Coda1} and \eqref{Null2TypeSurfq1EqRESULT1a} we have 
\begin{equation}
\label{Null2TypeSurfq1EqRESULT1c} \omega_{An}(e_A)=0.
\end{equation}
On the other hand, from \eqref{Null2TypeSurfq1Eq3}, \eqref{Null2TypeSurfq1Eq4} and \eqref{Null2TypeSurfq1EqRESULT1a} we get
\begin{subequations}\label{Null2TypeSurfq1EqRESULT1ALL}
\begin{equation}
\label{Null2TypeSurfq1EqRESULT1d} e_n(\omega_{1n}(e_n))= e_n(\omega_{1A}(e_A))=0
\end{equation}
and by taking into account \eqref{Null2TypeSurfq1Eq1} and using Gauss equation \eqref{MinkGaussEquation} for $X=e_A,$ $Y=e_n,$ $Z=e_1,$ $W=e_n$ we obtain 
\begin{equation}
\label{Null2TypeSurfq1EqRESULT1e} e_A(\omega_{1n}(e_n))=0.
\end{equation}
From Codazzi equation \eqref{Coda1} for $i=1,\ j=A$ we have $\omega_{1A}(e_A)=e_1(k_A)/(k_1-k_A)$. Thus, we have
\begin{equation}
\label{Null2TypeSurfq1EqRESULT1f} e_A(\omega_{1A}(e_A))=0.
\end{equation}
\end{subequations}

Next, we want to give a geometric interpretation of these results.
\begin{Prop}\label{BihSurfCircl}
Let $M$ be a null 2-type hypersurface in $\mathbb E^{n+1}$ with shape operator given by \eqref{Null2TSurfShapeOp}. Then, an integral curve of $e_n$ is either a circle or line.
\end{Prop}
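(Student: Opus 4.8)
The plan is to compute the first two Euclidean derivatives of an arc-length parametrized integral curve $\gamma$ of $e_n$ and to show that $\gamma$ satisfies the third-order linear ODE $\gamma'''+c\,\gamma'=0$ for a constant $c\ge 0$; this forces $\gamma$ to be a straight line (when $c=0$) or a circle of radius $1/\sqrt c$ (when $c>0$).

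First I would put $\gamma'=e_n$ (legitimate since $e_n$ is a unit vector field), so that the Euclidean derivatives of $\gamma$ along itself are computed by applying $\widetilde\nabla_{e_n}$. The Gauss formula together with $h(e_n,e_n)=k_nN$, the skew-symmetry $\omega_{n1}(e_n)=-\omega_{1n}(e_n)$, and $\omega_{nA}(e_n)=-\omega_{An}(e_n)=0$ (established above) gives
\[
\gamma''=\widetilde\nabla_{e_n}e_n=-\omega_{1n}(e_n)\,e_1+k_nN .
\]
By \eqref{Null2TypeSurfq1EqRESULT1b} and \eqref{Null2TypeSurfq1EqRESULT1d} both $k_n$ and $\omega_{1n}(e_n)$ are constant along $\gamma$, so the first curvature of $\gamma$, namely $|\gamma''|=\sqrt{\omega_{1n}(e_n)^2+k_n^2}$, is constant; set $c:=\omega_{1n}(e_n)^2+k_n^2\ge 0$.

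Differentiating again along $\gamma$, the Weingarten formula gives $\widetilde\nabla_{e_n}N=-k_ne_n$ and \eqref{Null2TypeSurfq1Eq1} gives $\widetilde\nabla_{e_n}e_1=\omega_{1n}(e_n)\,e_n$; since $e_n$ annihilates $\omega_{1n}(e_n)$ and $k_n$, this yields
\[
\gamma'''=-\big(\omega_{1n}(e_n)^2+k_n^2\big)\,e_n=-c\,\gamma' .
\]
If $c=0$ then $|\gamma''|=0$ and $\gamma$ is a line. If $c>0$, then $\psi:=\gamma'$ solves $\psi''+c\psi=0$, hence $\psi(s)=\cos(\sqrt c\,s)\,a+\sin(\sqrt c\,s)\,b$ for constant vectors $a,b$; imposing $|\psi|\equiv 1$ forces $|a|=|b|=1$ and $\langle a,b\rangle=0$, and one more integration exhibits $\gamma$ as a circle of radius $1/\sqrt c$.

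I do not expect a genuine obstacle here: the connection-form facts needed ($\omega_{An}(e_n)=0$, the formula $\widetilde\nabla_{e_n}e_1=\omega_{1n}(e_n)e_n$, and the $e_n$-constancy of $k_n$ and $\omega_{1n}(e_n)$) have all been established in the preceding pages, so only the short bookkeeping for $\widetilde\nabla_{e_n}e_n$ remains. Alternatively, the last step can be phrased through the Frenet equations: constant first curvature together with $\gamma'''=-|\gamma''|^2\gamma'$ forces the torsion to vanish, giving the same dichotomy between circles and lines.
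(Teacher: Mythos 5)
Your proposal is correct and follows essentially the same route as the paper: both compute $\widetilde\nabla_{e_n}e_n=-\omega_{1n}(e_n)e_1+k_nN$, $\widetilde\nabla_{e_n}e_1=\omega_{1n}(e_n)e_n$, $\widetilde\nabla_{e_n}N=-k_ne_n$ and invoke \eqref{Null2TypeSurfq1EqRESULT1b} and \eqref{Null2TypeSurfq1EqRESULT1d} to see that $k_n$ and $\omega_{1n}(e_n)$ are constant along the integral curve, concluding it is a line or a circle. The only (cosmetic) difference is that you finish by integrating the ODE $\gamma'''+c\,\gamma'=0$ explicitly, whereas the paper phrases the last step via the Frenet equations of the curve.
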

\begin{proof}
By using \eqref{Null2TypeSurfq1EqRESULT1c}, we get
\begin{eqnarray}\label{BihSurfCirclEq1}
\widetilde\nabla_{e_n}e_n&=-\omega_{1n}(e_n)e_1+k_nN, \quad \widetilde\nabla_{e_n}e_1=\omega_{1n}(e_n)e_n,\quad \widetilde\nabla_{e_n}N=-k_ne_n,
\end{eqnarray}
Moreover, \eqref{Null2TypeSurfq1EqRESULT1d} and \eqref{Null2TypeSurfq1EqRESULT1b} imply that $\omega_{1n}(e_n)$ and $k_n$ are constant on any (connected) integral curve $\alpha$ of $e_n$. Let  $t,n$ be tangent and normal vector fields of $\alpha$. Note that we have $t=\left. e_n\right|_\alpha.$ If 
$$\|\hat\nabla_t t\|=a=0$$ 
then $\alpha$ is a line and proof is completed, where $\hat\nabla$ is the Levi-Civita connection of $\alpha$ and $a$ is the constant given by $\left.\big(\omega_{1n}(e_n)^2+k_n^2\big)^{1/2}\right|_\alpha$. 

We assume $\hat\nabla_t t\neq0$. Then, we have $n=\hat\nabla_t t/\|\hat\nabla_t t\|$. From \eqref{BihSurfCirclEq1} we have
$\hat\nabla_tt=an,\quad \hat\nabla_t n=-an$.
Thus, $\alpha$ is planar and its curvature $a>0$.
\end{proof}

By summing up  \eqref{Null2TypeSurfq1EqRESULT1ALL}, we see that \eqref{HSurfaceCasep2PosVecEq3v2} is satisfied for $q=1.$ Thus, by taking into account Proposition \ref{BihSurfCircl}, we have the following proposition which can be proved like Theorem \ref{HSurfClassTheo}.
\begin{Prop}\label{HSurfClassTheoSub}
If there is a null 2-type hypersurface with shape operator given by \eqref{Null2TSurfShapeOp}, then it must be congruent to one of the following hypersurfaces.
\begin{enumerate}
 \item[(i)] A generalized rotational hypersurfaces given by \eqref{HSurfaceCasep2PosVecCase1} with $p=n-2$, $q=1$ for some functions satisfying $\psi'^2+\phi'^2=1$ and \eqref{HSurfaceCasep2PosVecCase1Cond},

\item[(ii)] A generalized cylinder over a rotational hypersurface, given by \eqref{HSurfaceCasep2PosVecCase2}  with $p=n-2$, $q=1$ for some functions satisfying $\psi'^2+\phi'^2=1$ and \eqref{HSurfaceCasep2PosVecCase2Cond},

\item[(iii)] A generalized cylinder over a rotational surface, given by \eqref{HSurfaceCasep2PosVecCase2}  with $p=1$, $q=n-2$, for some functions satisfying $\psi'^2+\phi'^2=1$ and \eqref{HSurfaceCasep2PosVecCase2Cond}.
\end{enumerate}
\end{Prop}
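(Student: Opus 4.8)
The plan is to reproduce, essentially line by line, the argument proving Theorem \ref{HSurfClassTheo}, the only structural change being that the one-dimensional distribution spanned by $e_n$ must be handled via Proposition \ref{BihSurfCircl} in place of Proposition \ref{KeyPropIntSubman}. As observed just before the statement, the relations \eqref{Null2TypeSurfq1EqRESULT1ALL} show that \eqref{HSurfaceCasep2PosVecEq3v2} continues to hold in the present case $q=1$, namely $\omega_{12}(e_2)=\xi(s)$ and $\omega_{1n}(e_n)=\eta(s)$. Working in the coordinates of Remark \ref{RemarkCoords}, with $e_1=\partial_s$, $e_A=F_A\partial_{t_A}$ for $A=2,\dots,n-1$ and $e_n=F_n\partial_{t_n}$, the Gauss formulas \eqref{Null2TypeSurfq1Eq1} yield $x_{st_A}=\xi(s)\,x_{t_A}$ and $x_{st_n}=\eta(s)\,x_{t_n}$; integrating these exactly as in the proof of Theorem \ref{HSurfClassTheo}, and absorbing the resulting additive vector into a constant by Remark \ref{Remmmark1}, I would obtain
\[ x(s,t_2,\dots,t_n)=\psi(s)\,\Theta_1(t_2,\dots,t_{n-1})+\phi(s)\,\Theta_2(t_n), \]
subject to the orthogonality and unit-speed relations $\langle\Theta_{1,t_A},\Theta_{2,t_n}\rangle=0$ and $\langle x_s,x_s\rangle=1$ coming from \eqref{Section3Eq4}.

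It then remains to identify the two factors. A slice on which $s$ and $t_n$ are held fixed is an integral submanifold of the distribution $D$ of \eqref{KeyDistr}, which has dimension $p=n-2>1$; by Proposition \ref{KeyPropIntSubman} it is a round hypersphere of a $(p+1)$-plane when $k_2\neq0$, and a $p$-plane when $k_2\equiv0$ (in the latter case $\nabla k_2$ vanishes identically on $M$, since it can only point in the $e_1$-direction and $e_1(k_2)=\omega_{12}(e_2)(k_1-k_2)$ forces $\omega_{12}(e_2)\equiv0$, so Proposition \ref{KeyPropIntSubman} really does apply). A slice on which $s$ and $t_2,\dots,t_{n-1}$ are held fixed is an integral curve of $e_n$, hence a circle or a line by Proposition \ref{BihSurfCircl}. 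This gives three cases. If $k_2\neq0$ and $k_n\neq0$, then $\Theta_1$ parametrizes a hypersphere and $\Theta_2$ a circle, and one arrives at \eqref{HSurfaceCasep2PosVecCase1} with $p=n-2$, $q=1$. If $k_2\neq0$ and $k_n\equiv0$, then $\Theta_2$ becomes affine in $t_n$ and one arrives at \eqref{HSurfaceCasep2PosVecCase2} with $p=n-2$, $q=1$. If $k_2\equiv0$ — which forces $k_n\neq0$, since $k_2\neq k_n$ — then $\Theta_1$ parametrizes a $p$-plane and $\Theta_2$ a circle, and after interchanging the roles of the two factors ($\psi\leftrightarrow\phi$, $\Theta_1\leftrightarrow\Theta_2$) one arrives at \eqref{HSurfaceCasep2PosVecCase2} with $p=1$, $q=n-2$. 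In each case I would finish by computing the shape operator of the explicit immersion, expressing the principal curvatures through $\psi$ and $\phi$ as in \eqref{HSurfaceCasep2PosVecCase2ShapeOp} and \eqref{HSurfaceCasep2PosVecCase2ShapeOp2}, and imposing the $\mbox{H}$-relation \eqref{Null2TypeSurfq1Eq2b} to obtain \eqref{HSurfaceCasep2PosVecCase1Cond} or \eqref{HSurfaceCasep2PosVecCase2Cond}.

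The step that needs the most attention is the case $k_2\equiv0$: one must match the relabeling $(\psi,\Theta_1)\leftrightarrow(\phi,\Theta_2)$ against the template \eqref{HSurfaceCasep2PosVecCase2} so that the multiplicities come out as $(p,q)=(1,n-2)$ rather than $(n-2,1)$, and one must confirm that Proposition \ref{KeyPropIntSubman} genuinely delivers a $p$-plane there (which it does, once $\nabla k_2\equiv0$ is noted). Everything else is transcription: the integration is that of Theorem \ref{HSurfClassTheo}, and the passage from ``hypersphere or plane of $\mathbb E^{q+1}$'' (valid for $q>1$ via Proposition \ref{KeyPropIntSubman}) to ``circle or line'' is exactly the content of Proposition \ref{BihSurfCircl}.
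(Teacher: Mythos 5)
Your proposal is correct and follows exactly the route the paper intends: it notes that \eqref{Null2TypeSurfq1EqRESULT1ALL} restores \eqref{HSurfaceCasep2PosVecEq3v2} for $q=1$, reruns the integration of Theorem \ref{HSurfClassTheo}, and replaces the appeal to Proposition \ref{KeyPropIntSubman} for the $e_n$-factor by Proposition \ref{BihSurfCircl}. You in fact supply more detail than the paper, which leaves the proof as ``can be proved like Theorem \ref{HSurfClassTheo}''; in particular your explicit treatment of the case $k_2\equiv0$ leading to case (iii) with $(p,q)=(1,n-2)$ is a welcome elaboration of a point the paper glosses over.
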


\end{document}